  \newtheorem{thm}{Theorem}[section]
 \newtheorem{prop}[thm]{Proposition}
 \newtheorem{lemma}[thm]{Lemma}
 \newtheorem{rem}[thm]{Remark}
\DeclareMathOperator{\Log}{Log}
\def\C{\mathbb{C}}
\def\L{\mathbb{L}}
\def\N{\mathbb{N}}
\def\a{\alpha}
\def\b{\beta}
\author {Christian Berg and Henrik L.\ Pedersen}
\title {A family of Horn-Bernstein functions}
\date{\today}
\begin{document}
\maketitle

\begin{abstract}
 A family of recently investigated Bernstein functions is revisited and those functions for which the derivatives are logarithmically completely monotonic are identified. This leads to the definition of a class of Bernstein functions, which we propose to call Horn-Bernstein functions because of the results of Roger A. Horn. 
\end{abstract}
\noindent {\em \small 2020 Mathematics Subject Classification: Primary: 44A10,  Secondary: 26A48} 

\noindent {\em \small Keywords: Laplace transform, Generalized Stieltjes function, logarithmically completely monotonic function, Bernstein function}

\section{Introduction and main results}

The family of functions
\begin{equation}\label{eq:ha}
h_\a(z)=(1+1/z)^{\a z}:=\exp(\a z\Log(1+1/z)),\quad \a\in \C, 
\end{equation}
defined for $z$ in the cut plane $\mathcal A:=\C\setminus]-\infty,0]$,
has been examined in various publications, latest in \cite{B:M:P}, which contains references to previous treatments. Note that $\Log:\mathcal A\to\C$ is the principal branch of the logarithm, holomorphic in $\mathcal A$ and real on the positive half-line.

The functions in \eqref{eq:ha} appeared in the paper \cite{B} with the goal of finding the set of exponents $\a>0$  such that $h_\a$ is a Bernstein function or equivalently such that % to find the set of $\a>0$ for which
%\begin{equation}\label{eq:fa}
$f_\a(x)=e^\a-h_\a(x)$
%\end{equation} 
is a completely monotonic function. This problem was inspired by a remark in \cite[p.458]{A:B}.

We  adopt the notation of \cite{S:S:V} and denote by $\mathcal{CM}$ the set of completely monotonic functions and  $\mathcal{BF}$ the set of Bernstein functions. See the monographs \cite{B:F} and \cite{S:S:V} for a treatment of these classes of functions.

A family $\{\varphi_\a\}_{\a\in\C}$ of entire functions was found in  \cite[Theorem 2.10]{B:M:P}   such that
\begin{equation}\label{eq:phia}
f_\a(z)=\int_0^\infty e^{-sz}\varphi_\a(s)\,ds,\quad \Re z>0.
\end{equation}
These functions, initially given by a contour integral, were shown to have the power series expansion
%\begin{equation}\label{eq:ent1}
$$
\varphi_\a(s)=e^\a\sum_{n=0}^\infty(-1)^np_{n+1}(\a)\frac{s^n}{n!},\quad \a, s\in\C,
$$
%\end{equation}
where $(p_n)_{n\ge 0}$ denotes the sequence of polynomials recursively defined by $p_0(\a)=1$ and 
%\begin{equation}\label{eq:poly-rec}
$$
p_{n+1}(\a)=\frac{\a}{n+1}\sum_{k=0}^n \frac{k+1}{k+2} p_{n-k}(\a),\quad n\ge 0.
$$
%\end{equation}
(Notice that
%\begin{equation}\label{eq:poly1} 
$p_1(\a)=\a/2$ and $p_2(\a)=\a/3+\a^2/8$.)
%\end{equation}

Because of \eqref{eq:phia} and the theorem of Bernstein, we conclude that $f_\a\in \mathcal{CM}$ 
%is completely monotonic 
if and only if $\varphi_{\a}(s)\ge 0$ for $s>0$. In \cite[Theorem 1.7]{B:M:P} it was numerically established that there exists a number
\begin{equation}\label{eq:a*}
 \a^*\approx 2.29965\, 64432\, 53461\, 30332 
\end{equation}
such that $\varphi_\a$ is non-negative on $]0,\infty[$ if and only if $0\le \a\le\a^*$. For $\a$ in this interval $\varphi_\a$ is integrable over $[0,\infty[$ and  \eqref{eq:phia} holds for $\Re z\ge 0$. Furthermore, cf. \cite[Theorem 2.11]{B:M:P},
the Bernstein representation of $h_\a$ is
\begin{equation}\label{eq:bernha}
h_\a(z)=1+\int_0^\infty(1-e^{-sz})\varphi_\a(s)\,ds,\quad \Re z\ge 0, \quad 0\le \a\le \a^*,
\end{equation}
so the  L\'evy measure of $h_\a$ has the density $\varphi_\a$ with respect to Lebesgue measure.

For $0<\a\le 1$ there exists an integral formula for $\varphi_\a$, namely
%\begin{equation}\label{eq:St1}
$$
\varphi_\a(s)=\frac{1}{\pi}\int_0^1 (x/(1-x))^{\a x}\sin(\a\pi x) e^{-sx}\,dx,\quad 0<\a<1,\quad s\ge 0,
$$
%\end{equation}
and
%\begin{equation}\label{eq:St1_1}
$$\varphi_1(s)=e^{-s}+\frac{1}{\pi}\int_0^1 \left(x/(1-x)\right)^{x}\sin(\pi x) e^{-sx}\,dx,\quad s\ge 0.
$$
%\end{equation}
This shows that $\varphi_\a$ is not only non-negative but in fact that $\varphi_\a\in \mathcal{CM}$.  
%completely monotonic on $]0,\infty[$. 
Proofs are given in \cite[Section 3]{B:M:P}.  

Formula \eqref{eq:bernha} shows that $h_\a$ is a complete Bernstein function (see \cite[Chapter 6]{S:S:V}) for $0<\a\le 1$ and clearly also for $\a=0$. On the other hand $f_\a$ is not a Stieltjes function (defined below) for $\a>1$ by \cite[Theorem 3.1]{B:M:P}, so using the notation from \cite{S:S:V} we have
$$%\begin{equation}\label{eq:cbf1}
h_\a\in\mathcal{CBF} \iff \a\in[0,1].
$$%\end{equation}

The present paper started as an attempt to find a more direct proof that $h_2\in \mathcal{BF}$, i.e., that $h_2'\in \mathcal{CM}$. Our idea was to examine if $h_2'$ is a so-called logarithmically completely monotonic function. This is a stronger statement  since the class $\mathcal L$ of logarithmically completely monotonic functions is defined and characterized by the following result:

\begin{thm}
\label{thm:horn}
 The following conditions for a $C^\infty$-function $f: (0,\infty)\to (0,\infty)$ are equivalent and characterize the class $\mathcal L$:
 \begin{enumerate}
  \item[(i)] $-(\log f)'=-f'/f$ is completely monotonic,
  \item[(ii)] $f^{c}$ is completely monotonic for all $c>0$,
  \item[(iii)] $f^{1/n}$ is completely monotonic for all $n=1,2,\ldots$.
 \end{enumerate}
\end{thm}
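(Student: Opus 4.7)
The plan is to establish the cycle (i)$\Rightarrow$(ii)$\Rightarrow$(iii)$\Rightarrow$(i). The implication (ii)$\Rightarrow$(iii) is immediate, since $1/n$ is an admissible value of $c$, so the substance lies in the remaining two arrows.

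For (i)$\Rightarrow$(ii), I would fix $c>0$, set $g:=-f'/f\in\mathcal{CM}$ and $F:=f^c>0$, and note that the chain rule gives the differential identity $F'=-cFg$. I would then prove by induction on $n\ge 0$ that $(-1)^n F^{(n)}\ge 0$ on $(0,\infty)$; the base case $n=0$ is simply $F>0$. For the step from $n$ to $n+1$, applying Leibniz's rule to the product $Fg$ gives
$$
F^{(n+1)}=-c\sum_{k=0}^{n}\binom{n}{k}F^{(k)}g^{(n-k)},
$$
so
$$
(-1)^{n+1}F^{(n+1)}=c\sum_{k=0}^{n}\binom{n}{k}\bigl((-1)^k F^{(k)}\bigr)\bigl((-1)^{n-k}g^{(n-k)}\bigr).
$$
Each factor on the right is non-negative, the first by the inductive hypothesis and the second because $g\in\mathcal{CM}$, so the induction goes through and $F=f^c\in\mathcal{CM}$.

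For (iii)$\Rightarrow$(i), the chain rule gives
$$
-n\bigl(f^{1/n}\bigr)'(x)=-\frac{f'(x)}{f(x)}\cdot f(x)^{1/n},\qquad x>0,\ n\ge 1.
$$
Since $f^{1/n}\in\mathcal{CM}$ by assumption, its negated derivative $-(f^{1/n})'$ is again in $\mathcal{CM}$, and so is the positive multiple $-n(f^{1/n})'$. Letting $n\to\infty$, the right-hand side converges pointwise on $(0,\infty)$ to $-f'(x)/f(x)$, because $f(x)\in(0,\infty)$ forces $f(x)^{1/n}\to 1$. Thus $-f'/f$ is the pointwise limit on $(0,\infty)$ of a sequence from $\mathcal{CM}$, and I would invoke the well-known closure of $\mathcal{CM}$ under pointwise convergence, a standard consequence of Bernstein's theorem together with Helly's selection principle, to conclude that $-f'/f\in\mathcal{CM}$.

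The one point requiring a little care is this final closure step: pointwise limits of completely monotonic functions are completely monotonic provided the limit is a real-valued function on $(0,\infty)$, which is guaranteed here because $f$ is $C^\infty$ and strictly positive, so $-f'(x)/f(x)$ is finite for every $x>0$. Beyond that, the argument is a straightforward Leibniz computation combined with elementary properties of $\mathcal{CM}$, and I do not anticipate any further analytic difficulties.
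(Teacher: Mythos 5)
Your proof is correct. Note, however, that the paper does not prove Theorem~\ref{thm:horn} at all: it is quoted as a known result of Horn, with the reader referred to \cite{H}, \cite{HJ}, \cite{berg1} and \cite{B2008} for proofs. So there is no in-paper argument to compare against; measured against the standard proofs in those references, your cycle (i)$\Rightarrow$(ii)$\Rightarrow$(iii)$\Rightarrow$(i) is essentially the classical route. The Leibniz induction from the identity $F'=-cFg$ is exactly how one shows $\mathcal L\subset\mathcal{CM}$ (and more generally that (i) forces $f^c\in\mathcal{CM}$ for every $c>0$), and recovering $-f'/f$ as the pointwise limit of the completely monotonic functions $-n\bigl(f^{1/n}\bigr)'=-\bigl(f'/f\bigr)f^{1/n}$ is the standard way to close the loop. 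You correctly isolate the only delicate point, namely the closure of $\mathcal{CM}$ under pointwise limits with finite limit function (see e.g.\ \cite[Corollary 1.6]{S:S:V}), and your justification that the limit is finite is adequate. No gaps.
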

This result goes back to Horn \cite{H}. For more recent proofs and historical comments see \cite{HJ}, \cite{berg1}, and \cite{B2008}. Because of property (ii) we augment the class $\mathcal L$ to
$$%\begin{equation}\label{eq:l0}
\mathcal L_0:=\mathcal L\cup \{0\}.
$$%\end{equation}
We define a new class called Horn-Bernstein functions, and denoted by $\mathcal{HBF}$ as
%of Bernstein functions  
$$%\begin{equation}\label{eq:lbf}
\mathcal{HBF}:=\{f\in\mathcal{BF} \mid f'\in\mathcal L_0\}.
$$%\end{equation}
Our main result is the following:
\begin{thm}\label{thm:main} There exists a number $0\le \b^*<\a^*$ such that
$$%\begin{equation}\label{eq:beta*}
h_\a\in\mathcal{HBF} \iff 0\le\a\le\b^*.
$$%\end{equation}
We have $\b^*\approx 2.18858\,63446\,61757\,09765$. 
\end{thm}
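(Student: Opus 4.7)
If $\alpha=0$ then $h_0\equiv 1$ and $h_0'\equiv 0\in\mathcal L_0$, so $\alpha=0$ is in the Horn-Bernstein range. For $\alpha\in(0,\alpha^*]$, Theorem~\ref{thm:horn}(i) lets one replace ``$h_\alpha'\in\mathcal L$'' by ``$-h_\alpha''/h_\alpha'\in\mathcal{CM}$''. Writing $h_\alpha=\exp g$ with $g(z)=\alpha z\Log(1+1/z)$, one computes $g'=\alpha u$ where
\[
u(z):=\Log(1+1/z)-\frac{1}{z+1},\qquad u'(z)=-\frac{1}{z(z+1)^2},
\]
so $h_\alpha'=\alpha u\,h_\alpha$ and
\[
F_\alpha(z):=-\frac{h_\alpha''(z)}{h_\alpha'(z)}=\frac{1}{z(z+1)^2u(z)}-\alpha\,u(z).
\]
The task is thus to determine all $\alpha\in(0,\alpha^*]$ for which $F_\alpha\in\mathcal{CM}$.

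\textbf{Interval shape.} A Frullani-type identity gives $u(z)=\int_0^\infty k(s)e^{-sz}\,ds$ with $k(s):=(1-e^{-s})/s-e^{-s}>0$ on $(0,\infty)$, so $u\in\mathcal{CM}$. Since $F_{\alpha_1}=F_{\alpha_2}+(\alpha_2-\alpha_1)u$, the admissible $\alpha$-set is downward closed, and by pointwise-limit closedness of $\mathcal{CM}$ it is also closed, hence an interval $[0,\beta^*]$. The necessary condition $F_\alpha\ge 0$ on $(0,\infty)$ already gives the finite bound $\beta^*\le\inf_{z>0}1/[z(z+1)^2u(z)^2]$, so $\beta^*<\infty$.

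\textbf{Locating $\beta^*$.} I would derive a Laplace representation
\[
F_\alpha(z)=\int_0^\infty e^{-sz}\bigl[\rho_*(s)-\alpha k(s)\bigr]\,ds
\]
by deforming a Bromwich contour to a Hankel contour around the cut $(-\infty,0)$ of the analytic extension of $F_\alpha$ to $\mathcal{A}$. The extension exists because $z\mapsto 1+1/z$ maps $\mathcal{A}$ into itself; $\Log(1+1/z)$ has jump $-2\pi i$ across $(-1,0)$ and is continuous on $(-\infty,-1)$; $u$ has a logarithmic blow-up at $z=-1$, so $1/u$ vanishes simply there and reduces the double pole of $1/(z+1)^2$ in $q(z):=1/[z(z+1)^2u(z)]$ to a simple pole. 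Assuming $u$ is zero-free on $\mathcal{A}$ (so $1/u$ is holomorphic on $\mathcal{A}\setminus\{-1\}$), the contour computation---carried out in the spirit of the derivation of $\varphi_\alpha$ in \cite[Sections~2--3]{B:M:P}---produces an explicit density $\rho_*$ consisting of an integral over $(-1,0)$ coming from the jump plus a residue contribution at $z=-1$; positivity $\rho_*\ge 0$ must be verified. Then $F_\alpha\in\mathcal{CM}$ iff $\rho_*(s)\ge\alpha k(s)$ for every $s>0$, giving
\[
\beta^*=\inf_{s>0}\frac{\rho_*(s)}{k(s)},
\]
attained at an interior point $s_*>0$ (the ratio $\rho_*/k$ tends to $+\infty$ at both endpoints), characterised by $\rho_*(s_*)=\beta^*k(s_*)$ and $\rho_*'(s_*)=\beta^*k'(s_*)$. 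Solving this pair numerically yields $\beta^*\approx 2.18858\ldots$, and the strict separation $\beta^*<\alpha^*$ follows either from the numerical value or by exhibiting a single $z_0$ at which $F_\alpha(z_0)<0$ for some $\alpha<\alpha^*$.

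\textbf{Main obstacle.} The technical crux is the Hankel-contour analysis: proving that $u$ has no zeros in $\mathcal{A}$, computing the residue of $q$ at $z=-1$ in closed form, and verifying positivity of the resulting density $\rho_*$. Once $\rho_*$ is in hand, pinpointing $\beta^*$ reduces to a one-dimensional optimization.
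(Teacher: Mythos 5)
Your reduction is the same as the paper's: $h_\alpha'=\alpha\rho h_\alpha$ with $\rho(x)=\log(1+1/x)-1/(x+1)$, the criterion $-h_\alpha''/h_\alpha'=g-\alpha\rho\in\mathcal{CM}$ with $g=-\rho'/\rho=1/[x(x+1)^2\rho(x)]$, monotonicity in $\alpha$ giving an interval $[0,\beta^*]$, and the reformulation $\beta^*=\inf_{s>0}\rho_*(s)/k(s)$ where $\rho_*$ is the Laplace density of $g$ and $k$ that of $\rho$ (the paper states exactly this as its final Proposition, with $M(x)=e^x\rho_*(x)/(e^xk(x))$ written out in Taylor coefficients). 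But what you label the ``technical crux'' and defer is precisely the content of the paper's proof, and two of your supporting claims are wrong or unproven. First, the singularity analysis at $z=-1$: $\rho(z)=\Log((z+1)/z)-1/(z+1)$ has a \emph{simple pole} there (from the $-1/(z+1)$ term), not a logarithmic blow-up. If the blow-up were only logarithmic, $1/\rho$ would vanish slower than any power of $(z+1)$ and $q(z)=1/[z(z+1)^2\rho(z)]$ would not have a simple pole with finite residue; your Hankel-contour computation would then not close. The correct statement (simple pole of $g$ at $-1$ with residue $1$, producing the $e^{-s}$ term in $\rho_*$) is what the paper establishes via the limit $m_1=\lim_{y\to0^+}iy\,g(-1+iy)=1$. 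Second, you assume zero-freeness of $\rho$ on $\mathcal{A}$ and positivity of the resulting density $\rho_*$ without a mechanism to prove either. The paper gets both at once from the identity $(z+1)\Log(1+1/z)-1=\int_0^1(1-t)/(z+t)\,dt$, which shows $1/(zg(z))\in\mathcal{S}$, hence $g\in\mathcal{S}$; the Stieltjes--Perron inversion then yields the explicit nonnegative density $\tau_0$ and $\rho_*(s)=e^{-s}+\int_0^1e^{-st}\tau_0(t)\,dt\ge0$ for free.

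The second genuine gap is the certification of $\inf_{s>0}\rho_*(s)/k(s)$. Since $\rho_*$ is not elementary, ``solving the critical-point equations numerically'' does not by itself prove either $2\le\beta^*$ or $\beta^*<\alpha^*$, nor does it control the infimum over the noncompact half-line. The paper's substitute is to pass to the Taylor coefficients of $e^s\rho_*(s)-\alpha e^sk(s)$, i.e.\ the Hausdorff moments $t_n=\int_0^1s^n\tau_0(1-s)\,ds$ versus $\alpha/(n+1)$: it proves $t_n>c/(n+1)$ for all large $n$ from the blow-up of $\tau_0(1-s)$ at $s=1$ (Lemma~\ref{thm:p}), computes $t_0,\dots,t_N$ exactly by a recursion derived from the power series of $(1+u)\log(1+u)-u$, and bounds the tail $R_N$ to make the finite numerical check conclusive in both directions (lower bound $P_N(x,\alpha)>0$ for $\alpha=2.188585$, upper bound $P_N(x_0,\alpha)+R_N(x_0)<0$ for $\alpha=2.188590$). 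Some device of this kind --- coefficientwise positivity for large $n$ plus a rigorously bounded remainder --- is needed to turn your one-dimensional optimization into a proof, and it is absent from the proposal.
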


For a given  $\lambda >0$ a function $f:(0,\infty)\to \mathbb R$ is called a generalized Stieltjes function of order $\lambda$ if 
\begin{equation}
\label{eq:def-S}
f(x)=\int_0^{\infty}\frac{d\mu(t)}{(x+t)^{\lambda}}+c,
\end{equation}
where $\mu$ is a positive measure on $[0,\infty)$ making the integral converge for $x>0$ and $c\geq 0$. The set of generalized Stieltjes functions of order $\lambda$ is denoted $\mathcal S_\lambda$. Note that a function in $\mathcal S_\lambda$ has a holomorphic extension to the cut plane $\mathcal A$.
For additional information on these classes see e.g.\ \cite{K:P2}.

The class $\mathcal S_1$ of generalized Stieltjes functions of order $1$ is just denoted $\mathcal S$, and its members are simply called Stieltjes functions.

We remark that $f$ is a generalized Stieltjes function of order $\lambda$ of the form \eqref{eq:def-S} if and only if 
$$%\begin{equation}\label{eq:sokalabsolutely}
f(x)=\frac{1}{\Gamma(\lambda)}\int_0^{\infty}e^{-xt}t^{\lambda-1}\kappa(t)\, dt+c, \quad x>0,
$$%\end{equation}
where $\kappa\in \mathcal{CM}$. In the affirmative case, 
$
\kappa(t)=\int_0^{\infty}e^{-ts}\, d\mu(s).
$
See \cite[Eqn.(3)]{B:F1} or  \cite[Lemma 2.1]{K:P2}. This characterization shows also that
$\mathcal S_{\lambda_1}\subset \mathcal S_{\lambda_2}$ for $\lambda_1<\lambda_2$.

In accordance with \cite[Chapter 8]{S:S:V} we define for $\lambda>0$ the classes
$$%\begin{equation}\label{eq:bfla}
\mathcal{TBF}_{\lambda}:=\{f\in\mathcal{BF} \mid f'\in\mathcal S_{\lambda}\}, 
$$%\end{equation}
which can also be characterized as the set of Bernstein functions for which the L\'evy measure has a density $m(s)$ such that $s^{2-\lambda}m(s)$ is completely monotonic.
Clearly
$$
\mathcal{TBF}_{\lambda_1}\subset\mathcal{TBF}_{\lambda_2}\;\mbox{for}\;
0<\lambda_1<\lambda_2.
$$
The class $\mathcal{TBF}_1$ is also known as the class of Thorin-Bernstein functions simply denoted $\mathcal{TBF}$, see \cite[Chapter 8]{S:S:V} and $\mathcal{TBF}_2=\mathcal{CBF}$ (which is easily verified) is the class of complete Bernstein functions. Since $\mathcal S_2\subset \mathcal L_0$ by \cite{B:K:P} we have
$$
\mathcal{TBF}_{\lambda}\subset\mathcal{CBF}\subset\mathcal{HBF},\quad 0<\lambda<2.
$$
Concerning the family $h_\a,\a\ge 0$ we can summarize the known and new results as follows:
\begin{enumerate}
\item[(i)]
For $0<\lambda<2$ we have $h_\a\in\mathcal{TBF}_{\lambda}$ if and only if $\a=0$.
\item[(ii)]
$h_\a\in \mathcal{CBF}$ if and only if $0\le\a\le 1$.
\item[(iii)]
$h_\a\in \mathcal{HBF}$ if and only if  $0\le\a\le \beta^*$.
\item[(iv)]
$h_\a\in \mathcal{BF}$ if and only if  $0\le\a\le \a^*$.
\end{enumerate}
 
Only $\rm{(i)}$ requires a comment.
If $h_\a\in\mathcal{TBF}_\lambda$ for $0<\lambda<2$, then necessarily $0\le\a\le 1$, and by Equation \eqref{eq:bernha} the L\'evy measure for $h_\a$ has the density $\varphi_\a$, which must satisfy $s^{2-\lambda}\varphi_\a(s)\in\mathcal{CM}$. Since $\varphi_\a(0)=e^\a(\a/2)$ is finite, this is only possible for $\a=0$.

\begin{rem}{\rm There is a one-to-one correspondence between the set $P([0,\infty[)$ of infinitely divisible probability measures $\sigma$ on $[0,\infty[$  and the set of  Bernstein functions $f$ satisfying $f(0)=0$ via Laplace transformation
$$
\L(\sigma)(x):=\int_0^\infty e^{-tx}\,d\sigma(t)=e^{-f(x)},\quad x\ge 0.
$$  

Define $T_\lambda\subset P([0,\infty[), \lambda>0$  by 
$$
T_\lambda:=\{\sigma\in P([0,\infty[) \mid \L(\sigma)=e^{-f}, f\in\mathcal{TBF}_\lambda, f(0)=0\}.
$$
In \cite{B:F1} there is a discussion of these sets and their relation to exponential families.
}
\end{rem}

\section{Preliminary results}
A computation shows that $h_\a'(x)=\a h_\a(x)\rho(x)$, where
\begin{equation}\label{eq:rho}
\rho(x)=\log(1+1/x)-\frac{1}{x+1}=\int_0^\infty e^{-tx}\left(\frac{1-e^{-t}}{t}-e^{-t}\right)dt.
\end{equation}
This function $\rho$ together with the function $g$ defined as
\begin{equation}
\label{eq:def-g}
g(x)=-\frac{\rho'(x)}{\rho(x)}
\end{equation}
will be important in our investigations since $h_\a' \in\mathcal L$ if and only if
\begin{equation}\label{eq:hainL}
-\frac{h_\a''}{h_\a'}=g-\a\rho\in \mathcal{CM}.
\end{equation}

\begin{rem}{\rm
We notice that $\rho\in \mathcal{S}_2\setminus \mathcal{S}$ since
$$
\int_0^1\frac{t}{(x+t)^2}\,dt=\rho(x)=\int_0^1\frac{1}{x+t}\,d(t- \delta_1(t)),
$$
$\delta_1$ denoting the Dirac probability measure with mass at the point $1$.}
\end{rem}

The following results are essential ingredients in the search of $\a$ for which \eqref{eq:hainL} holds.

\begin{prop}\label{thm:g}
The function $g$ in \eqref{eq:def-g} is a Stieltjes function:
\begin{equation}\label{eq:g}
g(x)=\frac{1}{x(x+1)[(x+1)\log(1+1/x)-1]}=\frac{1}{x+1}+\int_0^1\frac{\tau_0(t)}{x+t} dt,
\end{equation}
where $\tau_0$  is a probability density on $]0,1[$ given by
\begin{equation}\label{eq:tau}
\tau_0(t)=\left(t[(1-t)\log((1-t)/t)-1]^2 +\pi^2t(1-t)^2\right)^{-1}.
\end{equation}
The function $\tau_0$ is convex with a unique minimum at $t^*\approx 0.592$ with minimum value $m\approx 0.569$. It decreases strictly from $\infty$ to $m$ on $]0,t^*[$ and increases strictly from $m$ to 1 on $]t^*,1[$.  
\end{prop}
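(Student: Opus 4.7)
The opening closed-form identity is a direct calculation: from $\rho(x) = \log(1+1/x) - 1/(x+1)$ one finds $\rho'(x) = -1/(x(x+1)^2)$, and writing $\rho(x) = [(x+1)\log(1+1/x) - 1]/(x+1)$ the ratio $g = -\rho'/\rho$ collapses to $1/(x(x+1)[(x+1)\log(1+1/x)-1])$.

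For the Stieltjes representation the plan is to apply Perron--Stieltjes inversion to the meromorphic extension of $g$ to $\mathcal{A}$. A preliminary step is to check that $(z+1)\Log(1+1/z)-1$ (equivalently $\rho$) has no zeros in $\mathcal{A}$; after the substitution $w = 1+1/z$ this reduces to showing $w\Log w = w-1$ has only the root $w = 1$ in the relevant image of $\mathcal{A}$, which can be handled by a monotonicity/argument-principle argument. Granted this, one computes boundary values of $g$ from the upper half-plane along $(-\infty,0)$. For $t > 1$ the quantity $1+1/(-t+i\varepsilon)$ tends to $1-1/t \in (0,1)$, so $g$ extends holomorphically across $(-\infty,-1)$ and contributes no mass. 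At $z = -1$ the singular term $-1/(z+1)$ in $\rho$ dominates, giving $\rho(z) \sim -1/(z+1)$ and $\rho'(z) \sim 1/(z+1)^2$, hence $g$ has a simple pole at $-1$ with residue $1$, producing the isolated term $1/(x+1)$. For $t \in (0,1)$ the principal branch of $\Log(1+1/z)$ picks up $-i\pi$ because $(z+1)/z$ approaches the negative real axis from below, yielding
\[
  (z+1)\Log(1+1/z) - 1 \;\longrightarrow\; A(t) - i\pi(1-t), \qquad A(t):=(1-t)\log\tfrac{1-t}{t} - 1,
\]
together with $z(z+1) \to -t(1-t)$. Inverting and taking the imaginary part,
\[
  \operatorname{Im} g(-t+i0^+) = \frac{-\pi}{t\,A(t)^2 + \pi^2 t(1-t)^2},
\]
and the Stieltjes inversion formula $\tau_0(t) = -\pi^{-1}\operatorname{Im} g(-t+i0^+)$ delivers \eqref{eq:tau}. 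The normalization $\int_0^1 \tau_0 = 1$ then follows by matching $1/x$-coefficients at infinity: expanding $(x+1)\log(1+1/x)-1 = 1/(2x) + O(x^{-2})$ forces $g(x) = 2/x + O(x^{-2})$, while the representation gives $g(x) = (1+\int_0^1\tau_0)/x + O(x^{-2})$.

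For the qualitative properties of $\tau_0$ I would work with its reciprocal $D(t) = t\,A(t)^2 + \pi^2 t(1-t)^2$. The boundary behaviours $A(t) \sim \log(1/t)$ at $0$ and $A(1) = -1$ give $D(0^+) = 0$ and $D(1^-) = 1$, hence $\tau_0(0^+) = \infty$ and $\tau_0(1^-) = 1$. Differentiating with $A'(t) = -\log((1-t)/t) - 1/t$ yields $D'(t) = A(t)^2 + 2tA(t)A'(t) + \pi^2(1-t)(1-3t)$, and a sign analysis produces a unique critical point $t^* \in (0,1)$ (numerically $\approx 0.592$), necessarily a maximum of $D$ and hence the unique minimum $m \approx 0.569$ of $\tau_0$. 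The hardest step is the convexity of $\tau_0$, equivalent to $2(D')^2 > D\,D''$ on $(0,1)$. I would handle this by combining explicit Taylor expansions near the endpoints $t = 0,1$, where $D$ and its derivatives have controllable leading behaviour, with analytic estimates on a middle subinterval; this bookkeeping-heavy inequality is where I expect the main obstacle to lie.
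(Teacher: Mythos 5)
Your computations of the boundary density, the residue at $-1$, and the normalization via the $1/x$-coefficient all agree with the paper, and the closed form for $g$ is handled the same way. The genuine gap is in the logic of the middle step: you propose to ``apply Perron--Stieltjes inversion to the meromorphic extension of $g$'', but the inversion formula only identifies a measure whose existence is already known. Computing $\Im g(-t+i0^+)$ on the cut does not by itself prove that $g$ admits a representation $c+\int_0^1 d\tau(t)/(x+t)$ with $\tau\ge 0$; for that you need either an a priori argument that $g$ is a Stieltjes (equivalently, anti-Pick) function, or a genuine contour-deformation of the Cauchy integral with uniform control of $g$ near both endpoints of the cut. Your plan supplies neither. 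In particular you never examine the endpoint $z=0$, where $\Log(1+1/z)$ blows up and $g(z)\sim 1/(z\Log(1/z))$ is unbounded; one must check that no point mass sits there (the paper does this via $\lim_{y\to 0^+} iy\,g(iy)=0$), and this is exactly the kind of endpoint estimate your sketch omits. The paper closes this hole with a short structural argument: from $(z+1)\Log(1+1/z)-1=\int_0^1\frac{1-t}{z+t}\,dt$ it reads off both the nonvanishing of the denominator on $\C\setminus[-1,0]$ (which you propose to obtain by a vaguer substitution-plus-argument-principle route) and, after the identity $1/(zg(z))=\tfrac12+\int_0^1\frac{(1-t)^2}{z+t}\,dt$, the fact that $1/(zg(z))\in\mathcal S$, hence $g\in\mathcal S$; only then is inversion used to identify $\tau$. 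You should either import that argument or verify $\Im g\le 0$ throughout the upper half-plane before invoking inversion.

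A secondary point: for the convexity and monotonicity of $\tau_0$ you outline a genuinely analytic attack on $2(D')^2\ge D D''$ but explicitly leave it unfinished; the paper does no better (it defers to Maple), so this is not a gap relative to the paper, though it remains an unproved claim in your write-up as well.
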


%Theorem~\ref{thm:main} is based on the following result:

%\begin{figure}
 \begin{center}
\includegraphics[scale=0.4]{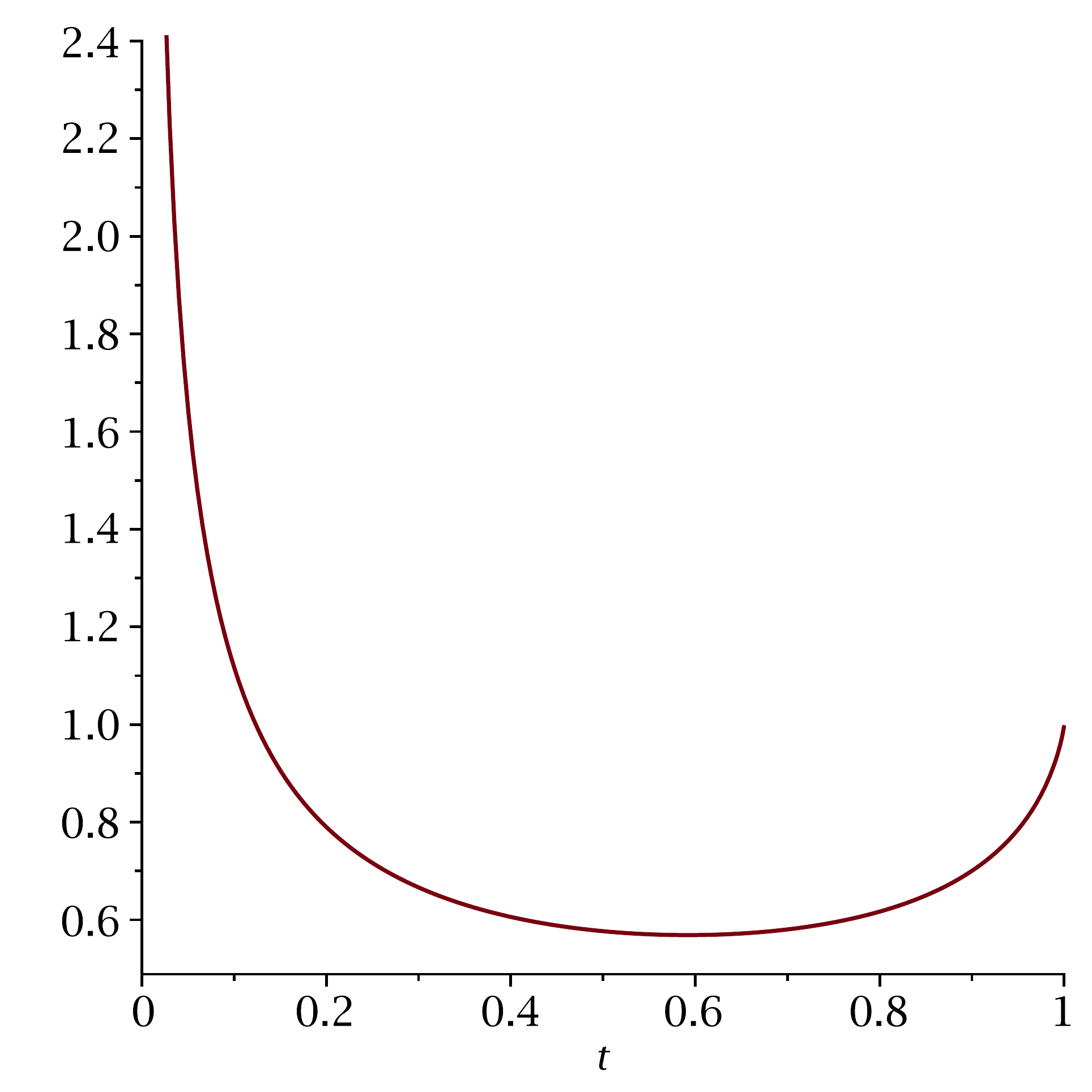}\\
 {The function $\tau_0$}
 \end{center}
%\end{figure}

\noindent {\it Proof of Proposition~\ref{thm:g}.} We start by noticing that $\Log(1+1/z)$ is holomorphic in $\C\setminus[-1,0]$  and that
$$
(z+1)\Log(1+1/z)-1=\int_0^1\frac{1-t}{z+t}\,dt,\quad z\in\C\setminus[-1,0].
$$
In particular $(z+1)\Log(1+1/z)-1\neq 0$ for $z\in\C\setminus[-1,0]$. Therefore $\rho(z)\neq 0$ and $g(z)$ defined in \eqref{eq:def-g} 
is holomorphic for $z\in\C\setminus[-1,0]$.

The first expression for $g$ in \eqref{eq:g} is easy, and to verify $g\in\mathcal S$ we use that it is equivalent to $1/(zg(z))\in\mathcal S$, cf. \cite[p. 25]{B2008}. However, 
\begin{eqnarray*}\label{eq:g1}
1/(zg(z))&=&(z+1)[(z+1)\Log(1+1/z)-1]=\int_0^1\frac{(z+1)(1-t)}{z+t}\,dt \\
&=&\int_0^1(1-t)\,dt +\int_0^1\frac{(1-t)^2}{z+t}\,dt=\frac12+\int_0^1\frac{(1-t)^2}{z+t}\,dt,
\end{eqnarray*}
showing that $1/(zg(z))\in\mathcal S$.
Hence, $g$ is a Stieltjes function, and holomorphic in $\C\setminus[-1,0]$. Therefore, 
$$
g(z)=c+\int_0^1\frac{d\tau(t)}{z+t}
$$
for a positive measure $\tau$ on $[0,1]$ and $c\ge 0$. Since $g(x)\to 0$ for $x\to\infty$, we have $c=0$. To find the measure $\tau$ we use the Stieltjes-Perron inversion formula in the form used in the proof in \cite[Lemma 1]{A:B}.

For $0<t<1, y>0$ we find
\begin{eqnarray*}
\lim_{y\to 0^+}g(-t+iy)&=&\left\{-t(1-t)[(1-t)(\log((1-t)/t)-i\pi)-1]\right\}^{-1}\\
&=&\frac{-1}{t(1-t)}\left\{(1-t)\log((1-t)/t)-1-i\pi(1-t)\right\}^{-1}
\end{eqnarray*}
uniformly for $t$ in compact subsets of $]0,1[$. Therefore,
$$
-\frac{1}{\pi}\lim_{y\to 0^+}\Im{g(-t+iy)}=\left\{t[(1-t)\log((1-t)/t)-1]^2+\pi^2 t(1-t)^2\right\}^{-1},
$$
which shows that $\tau$ has the density $\tau_0(t)$ given by  \eqref{eq:tau} on the open interval $]0,1[$. It follows that  $\tau=m_0\delta_0+m_1\delta_1+\tau_0(t)dt$  where $m_0, m_1\ge 0$ are possible masses at the points 0 and 1. We have further by dominated convergence
$$
m_0=\lim_{y\to 0^+}iyg(iy),\quad m_1=\lim_{y\to 0^+}iyg(-1+iy).
$$
We next evaluate the limits as $m_0=0, m_1=1$ thus showing \eqref{eq:g}.

In fact, for $y>0$
\begin{eqnarray*}
iy g(iy)&=&\left\{(1+iy)[(1+iy)(\Log(1+iy)-\Log(iy))-1]\right\}^{-1}\\
&=&\left\{(1+iy)[(1+iy)(\log\sqrt{1+y^2}+i\arctan y-\log y-i\pi/2)-1]\right\}^{-1},
\end{eqnarray*}
which tends to 0 for $y\to 0$ because of the term $-\log y$ in the denominator.

Similarly
\begin{eqnarray*}
\lefteqn{iy g(-1+iy)=\left\{(-1+iy)[iy(\Log(iy)-\Log(-1+iy))-1]\right\}^{-1}}\\
&=&\left\{(-1+iy)[iy(\log y +i\pi/2 -\log\sqrt{1+y^2}-(\pi-\arctan y))-1]\right\}^{-1},
\end{eqnarray*}
which tends to  1 for $y\to 0$.

 Note that $xg(x)\to 2$ for $x\to\infty$ because
$$
\lim_{u\to 0} g(1/u)/u=\lim_{u\to 0}\frac{u^2}{(1+u)[(1+u)\log(1+u)-u]}=2,
$$
by inserting the power series for $\log(1+u)$.  
 If this is combined with the last expression in \eqref{eq:g}, we get that $\tau_0$ is  a probability density.

The convexity and monotonicity properties of $\tau_0$ follows e.g.\ by a Maple program. \hfill $\square$

A normalized Hausdorff moment sequence is of the form
$$
\mu_n=\int_0^1 t^n\,d\mu(t),\quad n=0,1,\ldots,
$$
where $\mu$ is a probability measure on $[0,1]$. The cases where $\mu$ has the density $\tau_0(t)$ and $\tau_0(1-t)$ with respect to Lebesgue measure on the unit interval will be important in the following. 

\begin{thm}\label{thm:main2} Let $(t_n)$ denote the Hausdorff moment sequence given by
\begin{equation}\label{eq:tn}
t_n:=\int_0^1 s^n\tau_0(1-s)\,ds,\quad n\ge 0,
\end{equation}
and define
\begin{equation}\label{eq:main2}
G_\a(x):=2+\sum_{n=1}^\infty \frac{x^n}{n!}\left(t_n-\frac{\a}{n+1}\right), \quad x>0.
\end{equation}
Then $h_\a\in\mathcal{HBF}\iff G_\a(x)\ge 0,\;x>0$.
\end{thm}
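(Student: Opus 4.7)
My plan is to reduce the theorem, via \eqref{eq:hainL}, to the single Laplace-transform identity
$$g(x)-\a\rho(x)=\int_0^\infty e^{-xs}\,e^{-s}G_\a(s)\,ds,\qquad x>0,$$
after which Bernstein's theorem and uniqueness of Laplace inversion will yield the asserted equivalence. As a preliminary observation, the definition $\mathcal{HBF}=\{f\in\mathcal{BF}\mid f'\in\mathcal L_0\}$ combined with $\mathcal L\subset\mathcal{CM}$ and the positivity of $h_\a$ on $(0,\infty)$ for $\a\ge 0$ makes the Bernstein condition on $h_\a$ automatic once $h_\a'\in\mathcal L_0$; the case $\a=0$ (where $h_0\equiv 1$ and $G_0>0$) is trivial. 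Hence by \eqref{eq:hainL} it suffices to prove
$$g-\a\rho\in\mathcal{CM}\iff G_\a(x)\ge 0 \text{ for all } x>0.$$

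The main computation is the Laplace density of $g$. Inserting $1/(x+t)=\int_0^\infty e^{-xs}e^{-ts}\,ds$ into the Stieltjes representation of Proposition~\ref{thm:g} and applying Fubini gives
$$g(x)=\int_0^\infty e^{-xs}\left(e^{-s}+\int_0^1 e^{-ts}\tau_0(t)\,dt\right)ds.$$
The substitution $u=1-t$ in the inner integral, followed by power-series expansion of $e^{us}$ and the normalization $t_0=\int_0^1\tau_0(u)\,du=1$, rewrites the density of $g$ as $e^{-s}\bigl(2+\sum_{n\ge 1}t_n s^n/n!\bigr)$. A parallel expansion starting from \eqref{eq:rho} and the identity $(e^s-1)/s=\sum_{n\ge 0}s^n/(n+1)!$ gives the Laplace density of $\rho$ as $e^{-s}\sum_{n\ge 1}s^n/(n!(n+1))$. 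Subtracting $\a$ times the second from the first produces precisely $e^{-s}G_\a(s)$, which is the identity displayed above.

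The conclusion follows from Bernstein's theorem. The bound $0\le t_n\le 1$ makes $e^{-s}|G_\a(s)|$ bounded on $[0,\infty)$, so the integral converges absolutely for $x>0$. If $G_\a\ge 0$ on $(0,\infty)$, the identity is a genuine Bernstein representation and $g-\a\rho\in\mathcal{CM}$. Conversely, if $g-\a\rho\in\mathcal{CM}$, uniqueness of the Laplace representation forces the positive representing measure to equal $e^{-s}G_\a(s)\,ds$, whence $G_\a\ge 0$ on $(0,\infty)$ by continuity of the power series $G_\a$.

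I do not anticipate any serious obstacle: the argument is essentially an exchange of summation and integration and an identification of two power series. The only points meriting a line of verification are the Fubini interchange in the density computation (covered by boundedness of the integrand) and the automatic implication $h_\a'\in\mathcal L_0\Rightarrow h_\a\in\mathcal{BF}$ invoked in the reduction step.
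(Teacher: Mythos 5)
Your proposal is correct and follows essentially the same route as the paper: reduce via \eqref{eq:hainL} to complete monotonicity of $g-\a\rho$, compute its Laplace density from Proposition~\ref{thm:g} and \eqref{eq:rho}, and identify that density (after the substitution $u=1-t$ and the power-series expansion) as $e^{-s}G_\a(s)$, so Bernstein's theorem gives the equivalence. The paper writes the density as $F_\a(t)$ and checks $e^tF_\a(t)=G_\a(t)$, which is exactly your identity.
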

\noindent {\it Proof.}
 From \eqref{eq:rho} and Proposition \ref{thm:g} it follows that 
$$
g(x)-\a\rho(x)=\int_0^\infty e^{-tx}F_\a(t)\,dt,
 $$
where
$$
F_\a(t)=e^{-t}+\int_0^1e^{-ts}\tau_0(s)ds-\a\left(\frac{1-e^{-t}}{t} - e^{-t}\right)
$$
so by \eqref{eq:hainL} and Bernstein's theorem $h_\a'\in\mathcal L$ if and only if
$F_\a(t)\ge 0$ for $t>0$.
However,
$$
e^tF_\a(t)=1+\a+\int_0^1 e^{ts}\tau_0(1-s)\,ds-\a\frac{e^t-1}{t},
$$
so inserting the power series for $e^{ts}$ and $e^t$ and using the moments $(t_n)$ from \eqref{eq:tn}, 
we get that $F_\a\ge 0$ if and only if $G_\a\geq 0$.
\hfill $\square$

\begin{rem}\label{thm:remb*}{\rm For $x>0$ we see that  $G_\a(x)$ is a decreasing function of $\a$, so if 
$G_\beta(x)\ge 0$ for $x>0$ and $\a<\beta$, then also $G_\a(x)\ge 0$ for $x>0$. This means that there is a number $\beta^*\le \a^*$ such that $h_\a'\in{\mathcal L}_0$ for $0\le \a\le \beta^*$.  Here $\a^*$ is the number from \eqref{eq:a*}. 
}
\end{rem}
\begin{rem}{\rm Defining  the sequence $(a_n)_n$ by $a_0=1, a_n=1/t_{n}-1/t_{n-1},n\ge 1$, we get
$$
t_n=1/(a_0+\cdots+a_n).
$$
Motivated by \cite[Theorem 1.1]{B:D1} we are interested in knowing if $(a_n)_n$ is a Hausdorff moment sequence. If this is true,  then $(t_n)_n$ is an infinitely divisible Hausdorff moment sequence in the sense that $(t_n^c)_n$ is a Hausdorff moment sequence for any $c>0$, cf. \cite[Proposition 4.2]{B:D2}. Numerical experiments suggest
that the necessary and sufficient conditions of Hausdorff for a sequence to be a Hausdorff moment sequence are satisfied for $(a_n)_n$, but we have not been able to prove this.

That $(t_n)_n$ is infinitely divisible is the same as the claim that the probability density on $[0,\infty[$ given by $d(s)=\tau_0(1-e^{-s})e^{-s},s\ge 0$ is infinitely divisible in the classical sense.

}
\end{rem}

\section{Two Hausdorff moment sequences}
We aim at showing the nonnegativity of the function $G_\a$ given by \eqref{eq:main2} on the positive half-line. Our approach is to show that the $n$th coefficient in the power series is positive as $n$ becomes large and to do this we shall find an asymptotic lower bound on the moments $(t_n)_n$. Not all coefficients in the series $G_\a$ are positive and it will be necessary for us to treat the first coefficients, the intermediate coefficients and the tail of the coefficients using different methods.

We first describe how to compute the moments $(t_n)_n$ of the probability measure $\tau_0(1-s)ds$ analytically, using among other things the moments of the probability measure $\tau_0(s)ds$.
We have
\begin{equation}\label{eq:pow1}
\int_0^1\frac{\tau_0(t)}{x+t}dt=\sum_{n=0}^\infty \frac{(-1)^n}{x^{n+1}}s_n,\quad x>1,
\end{equation}
where
$$%\begin{equation}\label{eq:mom}
s_n=\int_0^1 t^n\tau_0(t)dt,\quad n=0,1,\ldots.
$$%\end{equation}
Using \eqref{eq:g} we can find another expression for the power series \eqref{eq:pow1}. For this set $x=1/u$ and consider
$$
\varphi(u):=g(1/u)-\frac{1}{1+1/u}= \frac{u^3}{(1+u)[(1+u)\log(1+u)-u]}- \frac{u}{1+u}.
$$
From the power series for $\log(1+u), |u|<1$ we find
$$
(1+u)\log(1+u)-u=\sum_{n=0}^\infty \frac{(-1)^n}{(n+1)(n+2)}u^{n+2},
$$
hence
\begin{equation}\label{eq:phirel}
 \varphi(u)=\frac{2u}{1+u}\left(\sum_{n=0}^\infty\frac{(-1)^n2}{(n+1)(n+2)}u^n\right)^{-1}-\frac{u}{1+u}.
\end{equation}
Now write
$$
\left(\sum_{n=0}^\infty\frac{(-1)^n 2}{(n+1)(n+2)}u^n\right)^{-1}=\sum_{n=0}^\infty \rho_n u^n.
$$
Then $\rho_0=1, \rho_1=1/3$ and in general
$$%\begin{equation}\label{eq:pow2}
\rho_n=\sum_{k=0}^{n-1}\rho_k\frac{2(-1)^{n-1-k}}{(n-k+1)(n-k+2)},\quad n\ge 1.
$$%\end{equation}
Notice that any $\rho_n$ can be computed using this recursive formula. 
% leading to 
% $$\rho_2=-1/18,   \rho_3=7/270, \rho_4=-5/324,  \rho_5=353/34020, \rho_6=-7669/1020600.
% $$
Returning to \eqref{eq:phirel} we have
\begin{eqnarray*}
\varphi(u)&=&u\left(\sum_{n=0}^\infty (-1)^nu^n\right)\left(1+\sum_{n=1}^\infty 2\rho_n u^n\right)\\
&=& u\sum_{n=0}^\infty (-1)^n\left(1+2\sum_{k=1}^n(-1)^k\rho_k\right)u^n,
\end{eqnarray*}
which compared with \eqref{eq:pow1} shows that
$$%\begin{equation}\label{eq:mom1}
s_n=1+2\sum_{k=1}^n (-1)^k\rho_k,\quad n\ge 0.
$$%\end{equation}
The moments $(t_n)_n$ are given in terms of $(s_n)_n$ as
$$
t_n=\sum_{k=0}^n (-1)^k\binom{n}{k}s_k,\quad n=0,1,\ldots,
$$
and this makes it possible to compute first the numbers $\rho_0,\ldots,\rho_n$, then the numbers $s_0,\ldots,s_n$ and finally the numbers $t_0,\ldots,t_n$. The first 6 moments are
\begin{eqnarray*}
t_0=1,\ t_1=2/3,\ t_2=5/9,\ t_3=67/135,\ t_4=371/810,\ t_5=1465/3402.%,\\ t_6=209081/510300.
\end{eqnarray*}

Next we shall find an asymptotic lower bound on the moments of a large class of probability densities.
\begin{lemma}\label{thm:p} Let $p$ be a probability density on $]0,1[$ such that $\lim_{t\to 1}p(t)=\infty$ and with moments $\mu_n,n=0,1,\ldots$.

For any $c>0$ the exists $n_0\in\N$ such that
$$
\mu_n > \frac{c}{n+1},\quad n\ge n_0.
$$
\end{lemma}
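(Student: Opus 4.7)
The plan is to exploit the fact that the densities $f_n(t) := (n+1)t^n$ are themselves probability densities on $]0,1[$ which, as $n\to\infty$, concentrate all their mass at the endpoint $t=1$. Rewriting
\[
(n+1)\mu_n = \int_0^1 (n+1)t^n\, p(t)\,dt,
\]
the goal becomes to show that this integral tends to infinity, and then the conclusion $\mu_n > c/(n+1)$ follows for all sufficiently large $n$.

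First I would use the hypothesis $\lim_{t\to 1^-} p(t) = \infty$ to obtain, for any prescribed $M>0$, some $\delta \in\,]0,1[$ such that $p(t) > M$ for every $t \in\,]1-\delta, 1[$. Restricting the integral above to this subinterval and using the pointwise lower bound on $p$ there gives
\[
(n+1)\mu_n \;\ge\; M\int_{1-\delta}^1 (n+1)t^n\,dt \;=\; M\bigl(1-(1-\delta)^{n+1}\bigr).
\]
Since $0<1-\delta<1$, the right hand side converges to $M$ as $n\to\infty$, so $\liminf_{n\to\infty}(n+1)\mu_n \ge M$. As $M>0$ was arbitrary, $(n+1)\mu_n \to \infty$.

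Finally, given $c>0$, apply the previous step with $M:=2c$: there exist $\delta$ and then $n_0\in\mathbb N$ such that for all $n\ge n_0$ we have $(1-\delta)^{n+1} < 1/2$, and hence $(n+1)\mu_n > 2c\cdot \tfrac12 = c$, as required. No step looks genuinely hard here; the only place one must be careful is that the blow-up of $p$ at $1$ is only needed qualitatively (pointwise lower bound on a one-sided neighbourhood of $1$), so no integrability or rate assumption on $p$ near $1$ is used beyond $p \in L^1([0,1])$, which is already guaranteed by $p$ being a probability density.
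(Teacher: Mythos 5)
Your proof is correct and follows essentially the same route as the paper: both use the blow-up of $p$ at $1$ to get a constant lower bound on $p$ over an interval $]T,1[$, then bound $\mu_n$ below by a constant times $\int_T^1 t^n\,dt = (1-T^{n+1})/(n+1)$. The paper simply takes the constant to be $c+1$ and concludes directly, while you take $2c$ and phrase the intermediate step as $(n+1)\mu_n\to\infty$; the substance is identical.
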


\begin{proof} 
By assumption on $p$ there exists $0<T<1$ such that $p(t)\ge c+1$ for $T<t<1$. We then get
$$
\mu_n=\int_0^1 t^n p(t)\,dt\ge (c+1)\int_T^1 t^n\,dt=\frac{c+1}{n+1}(1-T^{n+1}). 
$$
The last expression is larger than $c/(n+1)$ iff $T^{n+1}<1/(c+1)$ which holds for 
$n$ sufficiently large.
\end{proof}

\begin{prop}\label{thm:a=2} 
 For $n\ge 4$ we have 
\begin{equation}\label{eq:in}
 t_n >  \frac{2}{(n+1)},
\end{equation}
and $G_2(x)>0$ for $x>0$. In particular $h_2\in\mathcal{HBF}$.
\end{prop}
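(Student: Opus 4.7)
The plan has two parts: first prove the moment bound $t_n > 2/(n+1)$ for $n \ge 4$, then use it to show $G_2(x) > 0$ for all $x > 0$. Lemma~\ref{thm:p} is the key tool for the moment bound, and for the positivity of $G_2$ I would combine an elementary cubic lower bound (for small $x$) with the closed-form integral representation of $G_2$ (for large $x$).

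For the moment inequality, the cases $n = 4, 5$ follow immediately from the tabulated values just above: $t_4 = 371/810 > 2/5$ and $t_5 = 1465/3402 > 1/3$. For large $n$ I would apply Lemma~\ref{thm:p} with $c = 2$ to the density $p(s) = \tau_0(1-s)$. This is legitimate because Proposition~\ref{thm:g} gives $\tau_0(t) \to \infty$ as $t \to 0^+$, hence $p(s) \to \infty$ as $s \to 1^-$. To pin down the threshold I would choose $t_0 \in (0, t^*)$ with $\tau_0(t_0) = 3$ (possible since $\tau_0$ decreases strictly from $\infty$ to $m \approx 0.569$ on $(0, t^*)$), so that $\tau_0 \ge 3$ on $(0, t_0)$; the proof of the lemma then yields $t_n > 2/(n+1)$ once $(1 - t_0)^{n+1} < 1/3$. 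Any remaining finite range of $n$ between $5$ and the resulting $n_0$ would be dispatched by direct numerical evaluation of $t_n$ via the recursions for $\rho_n$ and $s_n$ introduced earlier in this section.

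For the positivity of $G_2$, set $c_n := t_n - 2/(n+1)$. The initial three coefficients are explicit ($c_1/1! = -1/3$, $c_2/2! = -1/18$, $c_3/3! = -1/1620$) and $c_n > 0$ for $n \ge 4$ by the moment bound, so dropping the positive tail yields
\begin{equation*}
G_2(x) \ge 2 - \frac{x}{3} - \frac{x^2}{18} - \frac{x^3}{1620},
\end{equation*}
which is strictly positive for $x$ below some explicit $M \approx 3.6$. For $x > M$ I would instead exchange summation and integration to obtain the closed form
\begin{equation*}
G_2(x) = 3 + \frac{2}{x} + e^x\!\left(\int_0^1 e^{-xu}\tau_0(u)\,du - \frac{2}{x}\right),
\end{equation*}
and lower-bound the Laplace-type integral by $(3/x)(1 - e^{-xt_0})$ using $\tau_0 \ge 3$ on $(0, t_0)$. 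The bracket is then nonnegative as soon as $x \ge \log(3)/t_0$, so that $G_2(x) \ge 3 + 2/x > 0$. The compact intermediate window $[M, \log(3)/t_0]$ can be handled by strengthening the polynomial lower bound with a few more explicit positive coefficients $c_n$ (each computable from the same recursions), after which a finite numerical check suffices.

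The main obstacle is quantitative rather than conceptual. Because $t_0$ with $\tau_0(t_0) = 3$ is numerically small (of order $10^{-2}$), the threshold $n_0$ produced by Lemma~\ref{thm:p} and the large-$x$ cutoff $\log(3)/t_0$ are both substantial, so the intermediate ranges in both parts require genuine numerical verification. The hardest step is thus not any single estimate but the bookkeeping needed to pin down the explicit cutoffs and verify the finite intermediate ranges.
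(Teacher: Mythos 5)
Your argument is correct, and the first half (the moment bound) is essentially the paper's own proof: both lower-bound the density $\tau_0(1-s)$ near $s=1$ by a constant using the fact that $\tau_0$ decreases from $\infty$ on $]0,t^*[$, obtain an explicit threshold $n_0$ of order $55$--$60$ beyond which $t_n>2/(n+1)$ holds, and dispatch the finite intermediate range $4\le n< n_0$ by computing the $t_n$ from the recursions for $\rho_n$ and $s_n$ (the paper uses $\sigma=0.985$, for which $\tau_0(1-\sigma)\approx 3.45$, getting the bound for $n\ge 57$ and checking $n=4,\dots,56$ by Maple). Where you diverge is the positivity of $G_2$. The paper retains one more term of the series: since $t_4-2/5=47/810>0$, dropping only the tail from $n=5$ onwards gives the quartic minorant
$$
G_2(x)>2-\frac{x}{3}-\frac{x^2}{18}-\frac{x^3}{1620}+\frac{47}{19440}x^4,
$$
which turns out to be positive on all of $]0,\infty[$ in one stroke (its positive leading term is exactly what rescues the cubic, which, as you note, fails beyond $x\approx 3.6$). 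You instead split into three regimes, using the cubic for small $x$, the closed form
$$
G_2(x)=3+\frac{2}{x}+e^x\Bigl(\int_0^1 e^{-xu}\tau_0(u)\,du-\frac{2}{x}\Bigr)
$$
together with $\tau_0\ge 3$ on $]0,t_0[$ for $x\ge \log(3)/t_0$, and extra positive coefficients plus a numerical check on the window in between. Your closed form is correct (it is just $e^xF_2(x)$ from the proof of Theorem~\ref{thm:main2}), and the large-$x$ bound $G_2(x)\ge 3+2/x$ is a genuinely nicer asymptotic statement than the paper provides; the price is a wide intermediate window $[3.6,\,\log(3)/t_0]\approx[3.6,57]$ requiring additional verification, which the paper's single globally positive quartic avoids entirely. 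If you simply keep your own positive coefficient $c_4$ in the polynomial bound, your window disappears and you recover the paper's proof.
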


\begin{proof} The density $\tau_0(1-t)$ is strictly increasing for $t\in [1/2,1[$ and has limit $\infty$ for $t\to 1$. Therefore there is $T\in ]1/2,1[$ such that $\tau_0(1-T)=2$ and for $T<\sigma<1$ we have
$$
t_n>\tau_0(1-\sigma)\int_{\sigma}^1 t^n\,dt=\frac{\tau_0(1-\sigma)}{n+1}(1- \sigma^{n+1}).
$$ 
The last expression is $\ge 2/(n+1)$ if and only if
$$
n+1\ge \frac{\log\left(\frac{\tau_0(1-\sigma)-2}{\tau_0(1-\sigma)}\right)}{\log(\sigma)}.
$$
Choosing $\sigma=0.985$ we get that \eqref{eq:in} holds for $n\ge 57$. 
That the inequality \eqref{eq:in} holds for $n=4,\ldots,56$ is established by a Maple program, based on the computational method described above (see Appendix \ref{A}). By \eqref{eq:main2} we therefore get for $x>0$
$$
G_2(x)>2+\sum_{n=1}^4 \frac{x^n}{n!}\left(t_n-\frac{2}{n+1}\right)=
2-\frac{x}{3}-\frac{x^2}{18}-\frac{x^3}{1620}+\frac{47}{19440}x^4>0,
$$
where the last inequality is easy to check.
\end{proof}

\begin{rem}{\rm
 As is evident from the proof above not all partial sums of the power series $G_2(x)$ are positive. The first five terms have to be combined to conclude positivity.}
\end{rem}

\noindent {\it Proof of Theorem~\ref{thm:main}.}
By Theorem~\ref{thm:main2} and Remark~\ref{thm:remb*} we know that $\b^*\le \a^*<2.3$ and  Proposition~\ref{thm:a=2} shows that $2\le\b^*$. 

The number $\b^*$ can be estimated in the following way similar to the proof of Proposition~\ref{thm:a=2}. Let $1/2<T<1$ be such that $\tau_0(1-T)=2.3$. For $T<\sigma<1$ we have
$$
t_n>\tau_0(1-\sigma)\int_{\sigma}^1 t^n\,dt=\frac{\tau_0(1-\sigma)}{n+1}(1- \sigma^{n+1}).
$$ 
The last expression is $\ge 2.3/(n+1)$ if and only if
$$
n+1\ge \frac{\log\left(\frac{\tau_0(1-\sigma)-2.3}{\tau_0(1-\sigma)}\right)}{\log(\sigma)}.
$$
Choosing $\sigma=0.989$ we get that $t_n>2.3/(n+1)$ for $n\ge 71$, while a Maple program (similar to the code in Appendix \ref{A}) shows that it holds for $n=5,\ldots,70$.
Define now the polynomials  depending on the parameter $2\le \a\le 2.3$
$$%\begin{equation}
P_N(x,\a):=2+\sum_{n=1}^N \frac{x^n}{n!}\left(t_n-\frac{\a}{n+1}\right),
$$%\end{equation}
and assume that $N\ge 5$. Since $G_\a(x)>P_N(x,\a)$ for $x>0$ we get $\a\le \b^*$ if
$P_N(x,\a)$ is non-negative for $x>0$. As an example it can be checked easily by a Maple computation that $P_N(x,\a)>0$ for $N=20$ and $\a=2.188585$.

On the other hand, since all Taylor coefficients (except the constant term) in $G_{\a}$ are less than $1$ then 
$$
G_\a(x)-P_N(x,\a)< R_N(x),\quad R_N(x)=\sum_{n=N+1}^\infty \frac{x^n}{n!},
$$ 
so if $p:=P_N(x_0,\a)$ is the global minimum of $P_N(x,\a)$ over $[0,\infty[$ assumed negative and if $R_N(x_0)<|p|$, then $G_a(x_0)<0$ and hence $\b^*<\a$.
As an example it can be checked easily by a Maple computation that $P_N(x,\a)$ for $N=20$ and $\a=2.188590$ takes the minimum value $p\approx-0.00002670$ for $x_0\approx 3.365577$ and that $R(x_0)\approx2.7\times 10^{-9}$.

Using this method $\b^*$ can easily be estimated very accurately and the computations yield the specific number in the theorem.\hfill 
$\square$ 

Let us end the paper by mentioning that $\b^*$ can be determined as the minimum value of a certain function $M$ defined in the following proposition. The graph of (approximations to) this function indicates again the minimum value as $\b^*$.
\begin{prop} The function
$$%\begin{equation}
M(x):=\frac{2+\sum_{n=1}^\infty \frac{t_n}{n!}x^n}{\sum_{n=1}^\infty \frac{x^n}{(n+1)!}},\quad x>0
$$%\end{equation}
is positive, continuous and tends to infinity for $x$ tending to  0 and to infinity.
Its minimum over $]0,\infty[$ equals $\beta^*$.
\end{prop}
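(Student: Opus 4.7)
The plan is to interpret $M$ directly as the pointwise envelope of the constraint arising from Theorem~\ref{thm:main2}. Writing
$$
N(x) := 2 + \sum_{n=1}^\infty \frac{t_n}{n!}x^n, \qquad D(x) := \sum_{n=1}^\infty \frac{x^n}{(n+1)!},
$$
equation \eqref{eq:main2} reads $G_\a(x) = N(x) - \a D(x)$. Since $D(x) > 0$ for every $x > 0$, the condition $G_\a(x) \ge 0$ on $(0,\infty)$ is equivalent to $\a \le N(x)/D(x) = M(x)$ for every $x > 0$; combined with Theorem~\ref{thm:main2} and Remark~\ref{thm:remb*}, this already gives $\b^* = \inf_{x>0} M(x)$. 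Once $M$ is shown to be continuous on $(0,\infty)$ and to tend to $\infty$ at both endpoints, the infimum is attained and equals the minimum.

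Positivity and continuity are immediate: since $t_n \le 1$ (the $t_n$ are moments of a probability measure on $[0,1]$), both series are entire with nonnegative coefficients, $N \ge 2$ everywhere, and $D > 0$ on $(0,\infty)$, so $M = N/D$ is continuous and positive there. As $x \to 0^+$ we have $N(x) \to 2$ and $D(x) \sim x/2$, whence $M(x) \to \infty$.

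For the behavior at $+\infty$ I would use Fubini to rewrite
$$
N(x) = 1 + \int_0^1 e^{sx}\tau_0(1-s)\,ds = 1 + e^x\int_0^1 e^{-ux}\tau_0(u)\,du,
$$
via the substitution $u = 1-s$, while $D(x) = (e^x - 1 - x)/x$. Hence
$$
M(x) = \frac{x\bigl(e^{-x} + \int_0^1 e^{-ux}\tau_0(u)\,du\bigr)}{1 - (1+x)e^{-x}},
$$
whose denominator tends to $1$ as $x \to \infty$. The one step with any real content is showing that $x\int_0^1 e^{-ux}\tau_0(u)\,du \to \infty$, and this is where I expect the main obstacle. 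It is resolved using the singularity $\lim_{t\to 0^+}\tau_0(t) = \infty$ recorded in Proposition~\ref{thm:g}: for any $C > 0$ choose $\delta \in (0,1)$ with $\tau_0(u) \ge C$ on $(0,\delta)$; then $x\int_0^\delta e^{-ux}\tau_0(u)\,du \ge C(1 - e^{-\delta x}) \to C$, and since $C$ was arbitrary the liminf is infinite. Everything else is routine interchange of summation and integration, so the only conceptual input beyond the existence theorem for $\b^*$ is the unboundedness of $\tau_0$ at $0$.
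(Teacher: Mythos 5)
Your proof is correct, and the treatment of the limit at infinity takes a genuinely different route from the paper's. The paper stays entirely at the level of the power-series coefficients: it invokes its Lemma~3.1 (for any $c>0$ one has $t_n>c/(n+1)$ for $n\ge n_0$, proved from the blow-up of $\tau_0(1-t)$ at $t=1$) to bound the tail of the numerator below by $c$ times the tail of the denominator, whence $\liminf_{x\to\infty}M(x)\ge c$ for every $c$. You instead sum both series in closed form, obtaining
$$
M(x)=\frac{x\left(e^{-x}+\int_0^1 e^{-ux}\tau_0(u)\,du\right)}{1-(1+x)e^{-x}},
$$
and exploit the singularity of $\tau_0$ at $0$ directly via $x\int_0^\delta e^{-ux}\tau_0(u)\,du\ge C(1-e^{-\delta x})$. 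The two arguments rest on the same analytic fact (the unboundedness of $\tau_0$ near an endpoint) but package it differently: the paper's version recycles a lemma it needs anyway and never leaves the world of Hausdorff moments, while yours produces an explicit Laplace-transform expression for $M$ that makes its growth at infinity transparent and bypasses Lemma~3.1 entirely. Your identification of $\inf_{x>0}M(x)$ with $\beta^*$ via $G_\a=N-\a D$ and Theorem~\ref{thm:main2} is exactly what the paper compresses into its final sentence, and your observation that continuity plus divergence at both endpoints upgrades the infimum to an attained minimum is a detail the paper leaves implicit. Everything checks out, including the interchange of sum and integral (justified by Tonelli since all terms are nonnegative) and the identity $D(x)=(e^x-1-x)/x$.
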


\begin{proof} It is clear that $M$ tends to infinity for $x\to 0$.

 Let $c>0$ and $n_0$ be as in Lemma~\ref{thm:p} when $p(t)=\tau_0(1-t)$. Setting
$$
R_c(x)=\sum_{n=n_0}^\infty \frac{x^n}{(n+1)!},
$$
we find
$$
M(x)\ge\frac{2+\sum_{n=1}^{n_0-1}\frac{t_n}{n!}x^n+cR_c(x)}{\sum_{n=1}^{n_0-1}\frac{x^n}{(n+1)!}+R_c(x)},
$$
hence $\liminf_{x\to\infty}M(x)\ge c$. Since $c>0$ is arbitrary we get
$\lim_{x\to\infty}M(x)=\infty.$ The minimum value is clearly the largest number $\a>0$ such that $G_\a\ge 0$ on the half-line.
\end{proof}

\appendix
\section{Maple code}
\label{A}
For the reader's convenience we have included the Maple code used in the proof of Proposition \ref{thm:a=2}.
\begin{verbatim}
N:=56;

#computation of \rho_n
rho[0]:=1;
for n to N do
    rho[n] := 
    sum(2*rho[k]*(-1)^(n - 1 - k)/((n - k + 1)*(n - k + 2)), 
    k = 0 .. n - 1);
end do

#computation of s_n
s[0]:=1;
for n to N do
    s[n] := 1 + 2*sum((-1)^k*rho[k], k = 1 .. n);
end do

#computation of t_n
t[0]:=1;
for n to N do
    t[n] := sum((-1)^k*binomial(n,k)*s[k], k = 1 .. n) + s[0];
end do

#testing t_n>2/(n+1)
c:=2
min(seq(t[n]-c/(n+1), n=0..N)
\end{verbatim}

\noindent
Christian Berg\\
Department of Mathematical Sciences\\
University of Copenhagen \\
Universitetsparken 5\\
DK-2100, Denmark\\
{\em email}:\hspace{2mm}{\tt berg@math.ku.dk}

\vspace{0.5cm}

\noindent
Henrik Laurberg Pedersen\\
Department of Mathematical Sciences\\
University of Copenhagen \\
Universitetsparken 5\\
DK-2100, Denmark\\
{\em email}:\hspace{2mm}{\tt henrikp@math.ku.dk}

\end{document}